\def\bx{{\bm{x}}}
\def\bX{{\bm{X}}}
\def\a{{\alpha}}
\def\b{{\beta}}
\def\e{{\epsilon}}
\def\w{{\omega}}
\def\o{{\otimes}}
\def\s{{\sigma}}
\def\vec{{\operatorname{Vec}}}
\def\SL2{{\mbox{SL}_2(\BZ)}}
\def\Mp2{{\mbox{Mp}_2(\BZ)}}
\def\1{{\mathbb{1}}}
\def\to{\rightarrow}
\def\BC{{\mathbb{C}}}
\def\BZ{{\mathbb{Z}}}
\def\CC{\mathcal{C}}
\def\DD{\mathcal{D}}
\def\lcm{{\operatorname{lcm}}}
\newcommand\Irr{{\operatorname{Irr}}}
\def\id{{\operatorname{id}}}
\def\lcm{{\operatorname{lcm}}}
\def\Tr{{\operatorname{Tr}}}
\def\ord{{\operatorname{ord}\,}}
\def\ev{{\operatorname{ev}}}
\def\coev{{\operatorname{coev}}}
\newcommand{\red}[1]{{\color{red} #1}}
\newcommand\ol[1]{\overline{#1}}
\newtheorem{thm}{Theorem}[section]
\newtheorem{prop}[thm]{Proposition}
\theoremstyle{definition}
\newtheorem{remark}[thm]{Remark}
\tikzset{->-/.style={decoration={
			markings,
			mark=at position #1 with {\arrow{>}}},postaction={decorate}}}
\tikzset{-<-/.style={decoration={
			markings,
			mark=at position #1 with {\arrow{<}}},postaction={decorate}}}
\title{Recovering $R$-symbols from modular data}
\begin{document}
\author{Siu-Hung Ng}
\address{Department of Mathematics,
Louisiana State University,
Baton Rouge, LA 70803, USA}
\email[Siu-Hung~Ng]{rng@@math.lsu.edu}
\author{Eric C. Rowell}
\address{ Department of Mathematics, Texas A\&M University, College
Station, TX 77843, USA }
\email[Eric~C.~Rowell]{rowell@math.tamu.edu}
\author{Xiao-Gang Wen}
\address{Department of Physics, Massachusetts Institute of Technology,
Cambridge, Massachusetts 02139, USA}
\email[Xiao-Gang~Wen]{xgwen@mit.edu}

\begin{abstract}
 Given a premodular category $\CC$, we show that its $R$-symbol can be recovered from its $T$-matrice,  fusion coefficients and some 2nd generalized Frobenius-Schur indicators. In particular, if $\CC$ is modular, its $R$-symbols for a certain gauge choice are completely determined by its modular data. 
\end{abstract}
\maketitle

\section{Introduction} 

Modular tensor categories (MTCs) are spherical nondegenerate braided fusion
categories. These categories naturally arise in both mathematics and physics
such that  the theory of subfactors, rational conformal theory \cite{MS8977},
topological quantum field theory \cite{W8951}, topological orders
\cite{W8987,W150605768}, and many more. Recently, MTC theory becomes more
important due to a realization that symmetries in 1-dimensional quantum systems
are actually described by MTCs in trial Witt class
\cite{KZ150201690,TW191202817,KZ200514178}, rather than by groups that can
only describe a subset of symmetries.

Associated to an MTC $\CC$ is a pair $(S,T)$ of complex square matrices, called the \emph{modular data} of $\CC$, which are  indexed by a complete set $\Irr(\CC)$ of simple objects of $\CC$, and uniquely determined by $\CC$ up to some permutations on $\Irr(\CC)$. The number $|\Irr(\CC)|$ is called the \emph{rank} of the modular data $(S,T)$.

There has been numerous efforts to generate modular data of small ranks (cf. \cite{Wen1, BNRW, NRWW, NRW}). However, it has been shown by examples that modular data of an MTC does not determine the underlying MTC  \cite{MS1}. More precisely, the nonabelian group $G$ of order $55$ admits two non-cohomologous 3-cocycles $\w$ and $\w'$ such that the centers $Z(\vec_G^\w)$ and  $Z(\vec_G^{\w'})$ are inequivalent braided fusion categories but their modular data are the same, up to a permutation. 

To completely determine an MTC $\CC$, up to equivalence, one may need its associativity constraints and braidings. It is natural to ask whether there is any invariant of MTCs beyond modular data without knowing the complete information of its associativity constraint and braiding. The Whitehead links \cite{BPD} and the Borromean links \cite{KMS} colored by simple objects of an MTC were shown to be invariants beyond modular data, and they can distinguish $Z(\vec_G^{\w})$ for any 3-cocycle $\w$ of the nonabelian group $G$ of order 55.

These invariants motivate us to consider the braiding or the gauge equivalent classes of $R$-symbols of an MTC as invariants beyond modular data. However, we find that, for some \emph{gauge choice}, the $R$-symbols can be determined by its modular data of any MTC. 

In general, $R$-symbols of a premodular category $\CC$ can determined by its $T$-matrix, fusion rules and the second generalized Frobenius-Schur indicators.  When $\CC$ is modular, its fusion rules and generalized Frobenius-Schur indicators can be expressed in terms of its modular data. 

The article assumes some basic knowledge on modular tensor categories, graphical calculus and Frobenius-Schur indicators for pivotal categories. The readers are referred to \cite{EGNO, BaKi, KasBook} for more details on tensor categories and their graphical calculus. The categorical development of Frobenius-Schur indicators can be found in \cite{NS1, NS2, NS4}.
\section{$R$-symbols of premodular categories} \label{s1}
Let $\CC$ be a premodular (or ribbon) category  over $\BC$, and $\Irr(\CC)$ denote a complete set of nonisomorphic simple objects containing the unit object $\1$. 

For any objects $x,y \in \CC$, the morphism space of $\CC$ from $x$ to $y$, denoted by $\CC(x,y)$, is a finite-dimensional $\BC$-linear space.  Given any $a,b,c \in \Irr(\CC)$, one can choose a basis $
B^{a,b}_c$ for the $\BC$-linear space $\CC(a \o b, c)$. The collection 
$$
B= \big\{B^{a,b}_c \mid a,b,c \in \Irr(\CC)\big\}
$$
is simply called a \emph{gauge choice} of $\CC$. In particular, we write  $N^{a,b}_c$ for $|B^{a,b}_c|$. The semisimplicity of $\CC$ immediately implies  that $N_c^{a,b}$ is the fusion coefficient determined by the tensor product
$$
a \o b = \bigoplus_{c \in \Irr(\CC)} N_c^{a,b} \, c\quad \text{for any } a, b \in \Irr(\CC)\,.
$$

For the purpose of using graphical calculus, we may assume $\CC$ is a strict pivotal category without loss of generality \cite[tthm. 2.2]{NS1}. A morphism $\pi \in \CC(a \o b, c)$ will be depicted as a trivalent tree: 
$\pi=  
\xy 0;/r1pc/:
(0,0)*{\bullet},
(0,0); (-1,1.2)*+{a} **\dir{-},
(0,0); (1,1.2)*+{b} **\dir{-},
(0,0); (0,-1.2)*+{c} **\dir{-}
\endxy
$. 
We may label the vertex of the trivalent tree to distinguish  possibly different morphisms in $\CC(a \o b, c)$. 
Thus, we may write 
$$
B^{a,b}_c = \left\{ \xy 0;/r1pc/:
(0,0)*{\bullet},
(0,0); (-1,1.2)*+{a} **\dir{-},
(0,0); (1,1.2)*+{b} **\dir{-},
(0,0); (0,-1.2)*+{c} **\dir{-},
(0,.5)*{\scriptstyle \a}
\endxy
: \a=1,\dots,N^{a,b}_c 
\right\}\,.
$$

The braiding $R_{x,y}: x \o y \to y \o x$ of $\CC$ for any $x,y \in \CC$ is depicted by the diagram: 
$$
R_{x,y} \, = \, \vcenter{\xy 0;/r1.5pc/:
(0,1)\vtwistneg[1],
(0,1.2)*{\scriptstyle{x}},
(1,1.2)*{\scriptstyle{y}},
(0,-.3)*{\scriptstyle{y}},
(1,-.3)*{\scriptstyle{x}},
\endxy}\,.
$$
For any $x, y \in \CC$, the braiding $R_{x,y}$ acts as a linear isomorphism, 
$$
R_{x,y}^z: \CC(y \o x, z) \to \CC(x \o y, z)
$$
by composition, for any $z \in \CC$. Specifically, $R^z_{x,y}(\pi) = \pi \circ R_{x,y}$ for $\pi \in \CC(y \o x, z)$. In terms of diagrams, 
$$
R^z_{x,y}: \vcenter{\xy 0;/r1pc/:
(0,0)*{\bullet},
(0,0); (-1,1.2)*+{\scriptstyle{y}} **\dir{-},
(0,0); (1,1.2)*+{\scriptstyle{x}} **\dir{-},
(0,0); (0,-1.2)*+{\scriptstyle{z}} **\dir{-}
\endxy} \mapsto \,
\vcenter{\xy 0;/r1pc/:
\vtwistneg~{(-1,2)*+{\scriptstyle{x}}}{(0.5,2)*+{\scriptstyle{y}}}{(-1,.5)}{(0.5,.5)},
(-.25,-.3)*{\bullet},
(-.25,-.3); (-1,.5) **\crv{(-1,.2)},
(-.25,-.3); (.5,.5) **\crv{(.5, .2)},
(-.25,-.3); (-.25,-1.2)*+{\scriptstyle{z}} **\dir{-},
\endxy}\,.
$$
Since $R_{x,y}$ is an isomorphism in $\CC$, if $B^{y,x}_z$ is a basis for $\CC(y \o x, z)$ then 
$$
B^{x,y}_z=\{R_{x,y}(\pi) \mid \pi \in B^{y,x}_z\}
$$
is a basis for $\CC(x \o y, z)$. 

Let $\theta$ be the ribbon structure of $\CC$, which means  $\theta$ is a natural isomorphism of the identity functor of $\CC$ satisfying the conditions:
\begin{equation}\label{eq:ribbon}
    (\theta_x)^*  = \theta_{x^*} \quad\text{and}\quad \theta_{x \o y} = (\theta_x \o \theta_y) R_{y,x} R_{x,y}
\end{equation}
for any $x, y \in \CC$. Thus, for any $a \in \Irr(\CC)$, $\theta_a$ is a scalar multiple of $\id_a$, and we will abuse notation by denoting this scalar by $\theta_a$. In general, $\theta_x: x \to x$ is an isomorphism in $\CC$. Since $\CC$ is assumed to be strict pivotal, we can depict $\theta_x$ as
$$
\theta_x \,=\, \vcenter{\xy 0;/r1pc/:
(0,2)\vcap[1],
\vtwistneg~{(1,2)}{(2,2)}{(1, 1)}{(2,1)},
(2,2); (2,3) **\dir{-}?(1)+(0.4,0)*{\scriptstyle x},,
(2,1); (2,0) **\dir{-}?(1)+(0.4,0)*{\scriptstyle x},
(0,2); (0,1) **\dir{-},
(0,1)\vcap[-1],\endxy}\,,
$$
where the coevaluation $\coev: \1 \to y \o y^*$ and the evaluation $\ev: y^* \o y \to \1$ morphisms are respectively depicted by the diagrams  
$$
\vcenter{\xy 0;/r1pc/:
(0,2.1)\vcap[2],
(2.2,1.8)*{\scriptstyle y^*},
(0,1.8)*{\scriptstyle y},
(5,2)*{,},
(8,2.7)\vcap[-2],
(8,3.3)*{\scriptstyle y^*},
(10,3.3)*{\scriptstyle y},
(12,2)*{.},
\endxy}
$$
In particular, for $a, b \in \Irr(\CC)$,
$$
\vcenter{\xy 0;/r1pc/:
(0,2)\vcap[1],
\vtwistneg~{(1,2)}{(2,2)}{(1, 1)}{(2,1)},
(2,2); (2,3) **\dir{-}?(1)+(0.4,0)*{\scriptstyle a},,
(2,1); (2,0) **\dir{-}?(1)+(0.4,0)*{\scriptstyle a},
(0,2); (0,1) **\dir{-},
(0,1)\vcap[-1],\endxy} = \theta_a\, \,
\vcenter{\xy 0;/r1pc/:
(2,3); (2,0) **\dir{-}?(.5)+(0.4,0)*{\scriptstyle a},\endxy} \quad\text{ and so }
(\theta_a)^* = 
\vcenter{\xy 0;/r1pc/:
(0,2)\vcap[1],
\vtwistneg~{(1,2)}{(2,2)}{(1, 1)}{(2,1)},
(2,2); (2,3) **\dir{-}?(.8)+(-0.4,0)*{\scriptstyle a},
(2,3)\vcap[1],
(2,1); (2,0) **\dir{-},
(0,2); (0,1) **\dir{-},
(-.5,0)\vcap[-2.5],
(0,1)\vcap[-1],
(-0.5,0); (-.5,4) **\dir{-}?(1)+(-0.4,0)*{\scriptstyle a^*},
(3,3); (3,-2) **\dir{-}?(1)+(0.4,0)*{\scriptstyle a^*},
\endxy} = \theta_a\, \,
\vcenter{\xy 0;/r1pc/:
(2,3); (2,0) **\dir{-}?(.5)+(0.4,0)*{\scriptstyle a^*},\endxy}
$$
and the second part of \eqref{eq:ribbon} becomes
\begin{equation} \label{eq:ribbon2}
 \vcenter{\xy 0;/r1.5pc/:
\vtwistneg~{(-1,3.5)*+{\scriptstyle{a}}}{(0.5,3.5)*+{\scriptstyle{b}}}{(-1,2)}{(0.5,2)},
\vtwistneg~{(-1,2)}{(0.5,2)}{(-1,.5)*+{\scriptstyle{a}}}{(0.5,.5)*+{\scriptstyle{b}}},
\endxy} =
\theta_a^{-1}\theta_b^{-1}\, \vcenter{\xy 0;/r1pc/:
\vcross~{(0,1)}{(.5,1)}{(0,0)}{(.5, 0)},
\vcross~{(.5,0)}{(1.5,0)}{(.5,-1)}{(1.5,-1.5)},
\vcross~{(0,-1)}{(.5,-1)}{(0,-2)}{(.5,-2)},
\vcross~{(-.5,0)}{(0,0)}{(-.5,-1)}{(0,-1)},
(-.5,-1);(-.5,-2) **\dir{-},
(-.5,0);(-.5,1) **\dir{-},
(-1.5,1)\vcap[1],
(-2,1)\vcap[2],
(.5,1);(.5,2)*+{\scriptstyle a} **\dir{-},
(1.5,0);(1.5,2)*+{\scriptstyle b} **\dir{-},
(-1.5,-2)\vcap[-1],
(.5,-2);(.5,-3)*+{\scriptstyle a} **\dir{-},
(-1.5,1);(-1.5,-2)**\dir{-},
(-2,-2)\vcap[-2],
(-2,1);(-2,-2) **\dir{-},
(1.5,-1.5);(1.5,-3)*+{\scriptstyle{b}} **\dir{-},
\endxy}\,.
\end{equation}

\begin{prop} \label{p1}
Let $\CC$ be a premodular category over $\BC$ with the ribbon structure $\theta$ and braiding $R$. Assume an arbitrary complete order on $\Irr(\CC)$.  There exists a gauge choice $B$ of $\CC$ such that the matrix $[R_{a,b}^c]$ relative to the corresponding bases  of $\CC(b \o a, c)$ and $\CC(a \o b, c)$ is a diagonal matrix for any $a,b,c \in \Irr(\CC)$. Moreover,
$$
[R_{a,b}^c] = \left\{\begin{array}{ll}
\id & \text{if } a > b,\\
\frac{\theta_c}{\theta_a \theta_b} \id & \text{if } a < b,\\
\frac{\sqrt{\theta_c}}{\theta_a} \Delta_a^c & \text{if } a = b,
\end{array}\right.
$$
where $\theta_c$ is the scalar of the ribbon structure specified at $c \in \Irr(\CC)$, $\sqrt{\theta_c}$ is any chosen square root of $\theta_c$ for each $c \in \Irr(\CC)$, and $\Delta_a^c$ is a signed diagonal matrix.
\end{prop}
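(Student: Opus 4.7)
The plan is to extract from the ribbon axiom \eqref{eq:ribbon2} a single scalar identity that essentially forces the shape of the braiding on each isotypic summand, and then use it to assemble a gauge choice case by case along the given order on $\Irr(\CC)$.

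First, I would translate \eqref{eq:ribbon2} into a statement about $\CC(a\otimes b, c)$. Composing $(\theta_a\otimes\theta_b)\, R_{b,a}\,R_{a,b}=\theta_{a\otimes b}$ on the right with any $\pi\in\CC(a\otimes b, c)$, and using that $\theta_{a\otimes b}$ acts as the scalar $\theta_c$ on morphisms factoring through the simple object $c$ while $\theta_a\otimes\theta_b$ contributes $\theta_a\theta_b$, yields the key identity
$$
R^{c}_{a,b}\circ R^{c}_{b,a} \;=\; \frac{\theta_c}{\theta_a\theta_b}\,\id_{\CC(a\otimes b,c)}.
$$
This is the single algebraic input driving the whole proposition.

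For an unordered pair of \emph{distinct} simples, say with $a>b$, the plan is to pick an arbitrary basis $B^{b,a}_c$ of $\CC(b\otimes a, c)$ and to \emph{define} $B^{a,b}_c := R^{c}_{a,b}(B^{b,a}_c)$, which is a basis because $R^{c}_{a,b}$ is invertible. In these bases $[R^{c}_{a,b}]=\id$, and the identity above immediately gives $[R^{c}_{b,a}]=\frac{\theta_c}{\theta_a\theta_b}\id$, matching the two off-diagonal cases of the statement. Because the construction is driven once per unordered pair, there is no risk of over-determination.

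For $a=b$, the source and target of $R^{c}_{a,a}$ coincide, and the identity becomes $(R^{c}_{a,a})^2=\frac{\theta_c}{\theta_a^2}\id$. Since $\theta_c\neq 0$, the polynomial $x^2-\theta_c/\theta_a^2$ is squarefree, so $R^{c}_{a,a}$ is semisimple with eigenvalues $\pm\sqrt{\theta_c}/\theta_a$ for any chosen square root. Choosing $B^{a,a}_c$ to consist of eigenvectors makes the matrix $\frac{\sqrt{\theta_c}}{\theta_a}\Delta^{c}_a$ with $\Delta^{c}_a=\diag(\pm 1,\dots,\pm 1)$.

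No single step is genuinely hard; the main point requiring care is simply that the three recipes are applied to disjoint data (one for each unordered pair with $a\neq b$, one for each $a=b$), so the resulting collection $B=\{B^{a,b}_c\}$ is well-defined and not forced to satisfy incompatible relations. The $a=b$ case is the only one where any real choice beyond the order enters, and this reflects exactly the sign ambiguity captured by $\Delta^{c}_a$.
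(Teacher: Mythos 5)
Your proof is correct and follows essentially the same route as the paper: the same gauge construction per unordered pair driven by the ribbon identity $R^{c}_{a,b}\circ R^{c}_{b,a}=\frac{\theta_c}{\theta_a\theta_b}\,\id$, which the paper derives graphically via \eqref{eq:ribbon2} and naturality of $\theta$. The only (minor) divergence is in the $a=b$ case, where you deduce diagonalizability of $R^c_{a,a}$ from the squarefree relation $(R^{c}_{a,a})^2=\frac{\theta_c}{\theta_a^2}\id$ rather than, as the paper does, from the finite order of the twist; your variant is marginally cleaner since it does not invoke $\ord(\theta)<\infty$.
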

\begin{proof}
For $a, b, c \in \Irr(\CC)$ with $a < b$, we (arbitrarily) fix a basis $B^{a,b}_c$ for $\CC(a\o b, c)$. If $a > b$, then $B_c^{a,b}:=R_{a,b}^c( B^{b,a}_c)$ is a basis for $\CC(a\o b, c)$ since $R^c_{a,b}$ is a $\BC$-linear isomorphism. Thus, if $a > b$, the matrix $[R^c_{a,b}]$ of  $R^c_{a,b}$ relative to these bases is the identity matrix $\id$. On the other hand, if $a < b$, then 
$$
B_c^{b, a}=\left\{
\vcenter{\xy 0;/r1pc/:
\vtwistneg~{(-1,2)*+{\scriptstyle{b}}}{(0.5,2)*+{\scriptstyle{a}}}{(-1,.5)}{(0.5,.5)},
(-.25,-.3)*{\bullet},
(-.25,-.3); (-1,.5) **\crv{(-1,.2)},
(-.25,-.3); (.5,.5) **\crv{(.5, .2)},
(-.25,-.3); (-.25,-1.2)*+{\scriptstyle{c}} **\dir{-},
(-.25,.1)*{\scriptstyle{\a}},
\endxy}: 
\vcenter{\xy 0;/r1pc/:
(0,0)*{\bullet},
(0,0); (-1,1.2)*+{\scriptstyle{a}} **\dir{-},
(0,0); (1,1.2)*+{\scriptstyle{b}} **\dir{-},
(0,0); (0,-1.2)*+{\scriptstyle{c}} **\dir{-},
(0,.4)*{\scriptstyle{\a}},
\endxy} \in B_c^{a,b}\right\}
$$
and 
$$
R_{a,b}^c\left(
\vcenter{\xy 0;/r1pc/:
\vtwistneg~{(-1,2.5)*+{\scriptstyle{b}}}{(0.5,2.5)*+{\scriptstyle{a}}}{(-1,.5)}{(0.5,.5)},
(-.25,-.3)*{\bullet},
(-.25,-.3); (-1,.5) **\crv{(-1,.2)},
(-.25,-.3); (.5,.5) **\crv{(.5, .2)},
(-.25,-.3); (-.25,-1.2)*+{\scriptstyle{c}} **\dir{-},
(-.25,.1)*{\scriptstyle{\a}},
\endxy}\right)
 = \vcenter{\xy 0;/r1pc/:
\vtwistneg~{(-1,3.5)*+{\scriptstyle{a}}}{(0.5,3.5)*+{\scriptstyle{b}}}{(-1,2)}{(0.5,2)},
\vtwistneg~{(-1,2)}{(0.5,2)}{(-1,.5)}{(0.5,.5)},
(-.25,-.3)*{\bullet},
(-.25,-.3); (-1,.5) **\crv{(-1,.2)},
(-.25,-.3); (.5,.5) **\crv{(.5, .2)},
(-.25,-.3); (-.25,-1.2)*+{\scriptstyle{c}} **\dir{-},
(-.25,.1)*{\scriptstyle{\a}},
\endxy} =
\theta_a^{-1}\theta_b^{-1} \vcenter{\xy 0;/r1pc/:
\vcross~{(0,1)}{(.5,1)}{(0,0)}{(.5, 0)},
\vcross~{(.5,0)}{(1.5,0)}{(.5,-1)}{(1.5,-1.5)},
\vcross~{(0,-1)}{(.5,-1)}{(0,-2)}{(.5,-2)},
\vcross~{(-.5,0)}{(0,0)}{(-.5,-1)}{(0,-1)},
(-.5,-1);(-.5,-2) **\dir{-},
(-.5,0);(-.5,1) **\dir{-},
(-1.5,1)\vcap[1],
(-2,1)\vcap[2],
(.5,1);(.5,2)*+{\scriptstyle a} **\dir{-},
(1.5,0);(1.5,2)*+{\scriptstyle b} **\dir{-},
(-1.5,-2)\vcap[-1],
(.5,-2);(.5,-3) **\dir{-},
(-1.5,1);(-1.5,-2)**\dir{-},
(-2,-2)\vcap[-2],
(-2,1);(-2,-2) **\dir{-},
(1.5,-1.5);(1.5,-3) **\dir{-},
(1,-3.5)*{\bullet},
(1,-3.5); (.5,-3) **\crv{(.5,-3.2)},
(1,-3.5); (1.5,-3) **\crv{(1.5, -3.2)},
(1,-3.5);(1,-4.3)*+{\scriptstyle c} **\dir{-},
(1,-3)*{\scriptstyle{\a}},
\endxy} =
\theta_a^{-1}\theta_b^{-1} \, \vcenter{\xy 0;/r1pc/:
(0,2)\vcap[1],
\vtwistneg~{(1,2)}{(2,2)}{(1, 1)}{(2,1)},
(2,2); (2,3) **\dir{-},
(2,1); (2,-.5) **\dir{-}?(1)+(0.4,0)*{c},
(0,2); (0,1) **\dir{-},
(0,1)\vcap[-1],
(2,3)*{\bullet},(2,3.5)*{\scriptstyle{\a}},
(2,3);(1,4.5)*+{\scriptstyle a} **\dir{-},
(2,3);(3,4.5)*+{\scriptstyle b} **\dir{-},
\endxy}=
\frac{\theta_c}{\theta_a\theta_b} \vcenter{\xy 0;/r1pc/:
(0,0)*{\bullet},
(0,0); (-1,1.2)*+{\scriptstyle{a}} **\dir{-},
(0,0); (1,1.2)*+{\scriptstyle{b}} **\dir{-},
(0,0); (0,-1.2)*+{\scriptstyle{c}} **\dir{-},
(0,.5)*{\scriptstyle{\a}}
\endxy},
$$
where the second equality follows from \eqref{eq:ribbon2} and third equality is a consequence of the naturality of the ribbon structure. 
Thus, the matrix $[R_{a,b}^c]$ of $R_{a,b}^c$ relative to the chosen bases is $\frac{\theta_c}{\theta_a \theta_b} \id$.

Following from \eqref{eq:ribbon}, $(\theta_a \o \theta_a)\circ (R_{a,a})^2 = \theta_{a\o a}$ for all $a \in \Irr(\CC)$. So $(R_{a,a})^{2N}= \id_{a \o a}$ for all $a \in \Irr(\CC)$, where $N = \ord(\theta) = \mathop{\lcm}\limits_{c \in \Irr(\CC)} \ord(\theta_a)$. Thus, $R_{a,a}^c$ is a finite order operator on $\CC(a\o a,c)$ for any $c \in \Irr(\CC)$. In particular, $R_{a,a}^c$ is diagonalizable. Therefore, there exists a basis $B^{a,a}_c$ of $\CC(a \o a, c)$  consisting of eigenvectors of $R_{a,a}^c$. 

Let $\pi_\a = \vcenter{\xy 0;/r1pc/:
(0,0)*{\bullet},
(0,0); (-1,1.2)*+{\scriptstyle{a}} **\dir{-},
(0,0); (1,1.2)*+{\scriptstyle{a}} **\dir{-},
(0,0); (0,-1.2)*+{\scriptstyle{c}} **\dir{-},
(0,.5)*{\scriptstyle{\a}},
\endxy} \in B^{a,a}_c$ such that $R_{a,a}^c\left(
 \vcenter{\xy 0;/r1pc/:
(0,0)*{\bullet},
(0,0); (-1,1.2)*+{\scriptstyle{a}} **\dir{-},
(0,0); (1,1.2)*+{\scriptstyle{a}} **\dir{-},
(0,0); (0,-1.2)*+{\scriptstyle{c}} **\dir{-},
(0,.5)*{\scriptstyle{\a}},
\endxy}
\right) = \lambda_\a \,
 \vcenter{\xy 0;/r1pc/:
(0,0)*{\bullet},
(0,0); (-1,1.2)*+{\scriptstyle{a}} **\dir{-},
(0,0); (1,1.2)*+{\scriptstyle{a}} **\dir{-},
(0,0); (0,-1.2)*+{\scriptstyle{c}} **\dir{-},
(0,.5)*{\scriptstyle{\a}},
\endxy}$ 
for some $\lambda_\a \in \BC^\times$. Then, by the same calculation as above, we find
$$
\lambda_\a^2   \vcenter{\xy 0;/r1pc/:
(0,0)*{\bullet},
(0,0); (-1,1.2)*+{\scriptstyle{a}} **\dir{-},
(0,0); (1,1.2)*+{\scriptstyle{a}} **\dir{-},
(0,0); (0,-1.2)*+{\scriptstyle{c}} **\dir{-},
(0,.5)*{\scriptstyle{\a}},
\endxy} = (R_{a,a}^c)^2\left(
 \vcenter{\xy 0;/r1pc/:
(0,0)*{\bullet},
(0,0); (-1,1.2)*+{\scriptstyle{a}} **\dir{-},
(0,0); (1,1.2)*+{\scriptstyle{a}} **\dir{-},
(0,0); (0,-1.2)*+{\scriptstyle{c}} **\dir{-},
(0,.5)*{\scriptstyle{\a}},
\endxy}
\right)=\vcenter{\xy 0;/r1pc/:
\vtwistneg~{(-1,3.5)*+{\scriptstyle{a}}}{(0.5,3.5)*+{\scriptstyle{a}}}{(-1,2)}{(0.5,2)},
\vtwistneg~{(-1,2)}{(0.5,2)}{(-1,.5)}{(0.5,.5)},
(-.25,-.3)*{\bullet},
(-.25,-.3); (-1,.5) **\crv{(-1,.2)},
(-.25,-.3); (.5,.5) **\crv{(.5, .2)},
(-.25,-.3); (-.25,-1.2)*+{\scriptstyle{c}} **\dir{-},
(-.25,.1)*{\scriptstyle{\a}},
\endxy} =
\theta_a^{-2} \vcenter{\xy 0;/r1pc/:
\vcross~{(0,1)}{(.5,1)}{(0,0)}{(.5, 0)},
\vcross~{(.5,0)}{(1.5,0)}{(.5,-1)}{(1.5,-1.5)},
\vcross~{(0,-1)}{(.5,-1)}{(0,-2)}{(.5,-2)},
\vcross~{(-.5,0)}{(0,0)}{(-.5,-1)}{(0,-1)},
(-.5,-1);(-.5,-2) **\dir{-},
(-.5,0);(-.5,1) **\dir{-},
(-1.5,1)\vcap[1],
(-2,1)\vcap[2],
(.5,1);(.5,2)*+{\scriptstyle a} **\dir{-},
(1.5,0);(1.5,2)*+{\scriptstyle a} **\dir{-},
(-1.5,-2)\vcap[-1],
(.5,-2);(.5,-3) **\dir{-},
(-1.5,1);(-1.5,-2)**\dir{-},
(-2,-2)\vcap[-2],
(-2,1);(-2,-2) **\dir{-},
(1.5,-1.5);(1.5,-3) **\dir{-},
(1,-3.5)*{\bullet},
(1,-3.5); (.5,-3) **\crv{(.5,-3.2)},
(1,-3.5); (1.5,-3) **\crv{(1.5, -3.2)},
(1,-3.5);(1,-4.3)*+{\scriptstyle c} **\dir{-},
(1,-3)*{\scriptstyle{\a}},
\endxy}=
\frac{\theta_c}{\theta_a^2} \,\vcenter{\xy 0;/r1pc/:
(0,0)*{\bullet},
(0,0); (-1,1.2)*+{\scriptstyle{a}} **\dir{-},
(0,0); (1,1.2)*+{\scriptstyle{b}} **\dir{-},
(0,0); (0,-1.2)*+{\scriptstyle{c}} **\dir{-},
(0,.5)*{\scriptstyle{\a}},
\endxy}\,.
$$
Therefore, $\lambda_\a = \e_\a \frac{\sqrt{\theta_c}}{\theta_a}$ where $\e_\a = \pm 1$, and the matrix $[R_{a,a}^c]$ relative to the basis $B^{a,a}_c$ is of the form $\frac{\sqrt{\theta_c}}{\theta_a} \Delta_a^c$ where $\Delta_a^c$ is the matrix given by $(\Delta_a^c)_{\a, \a'} = \e_\a \delta_{\a, \a'}$.
\end{proof}
\begin{remark}\label{r1}
For any $a, c \in \Irr(\CC)$, the last proposition show that the eigenvalues of $R_{a,a}^c$ can only be $\pm \frac{\sqrt{\theta_c}}{\theta_a}$. Let $d_{\pm}$ be the multiplicity of the eigenvalue $\pm \frac{\sqrt{\theta_c}}{\theta_a}$ of $R_{a,a}^c$ respectively.  One can reorder the basis $B^{a,a}_c$ so that the matrix $[R_{a,a}^c]$ relative to the basis $B_c^{a,a}$ is given by
$[R_{a,a}^c]=   \frac{\sqrt{\theta_c}}{\theta_a}\Delta_a^c$ with
$$
\Delta_a^c = \left[\begin{array}{c|c} I_{d_+}& 0\\\hline 0& -I_{d_-} \end{array}\right]\,,
$$
where $I_d$ denotes the identity matrix of rank $d$
\end{remark}
The $S$ and $T$-matrices of a premodular category are insufficient to determine the matrix $[R_{a,a}^c]$ for the gauge choice in the preceding proposition as the multiplicity of 1 (or $-1$) in the diagonal of $\Delta_{a,c}$ cannot be determined.  However, we will prove that the $T$-matrix, the fusion coefficient $N^{a,a}_c$ and the \emph{generalized Frobenius-Schur indicator} $\nu^{\iota(c)}_{2,1}(a)$  determine $\Delta_a^c$, and hence $R_{a,a}^c$, up to a permutation of $B^{a,a}_c$, for all $a, c\in \Irr(\CC)$, where $\iota: \CC \to Z(\CC)$ is an embedding.  In particular, if $\CC$ is an MTC, $\nu^{\iota(c)}_{2,1}(a)$ and $N^{a,a}_c$ can be expressed in terms of the $S$- and $T$-matrices.
\section{The main result}
We continue to use the conventions developed in Section \ref{s1}, and consider a premodular subcategory  $\CC$ as a (premodular) subcategory of its center $Z(\CC)$. 

Recall that the objects in the center $Z(\CC)$ of $\CC$ are pairs $(x, \s_{x,-})$ in which $x \in \CC$ and $\s_{x,y}: x \o y \to y \o x$ is a half-braiding for any object $y$ of $\CC$ \cite{KasBook}. In particular, $\s_{x,y}$ is a natural isomorphism in $y$ for all $y \in \CC$. We will depict the half-braiding $\s_{x,y}$  of $(x, \s_{x,-})$  for any $y \in \CC$ as
$$
\s_{x,y} = \vcenter{\xy 0;/r.5pc/:
{\ar@[red]@{-}@`{(0,-2.5), (3,-3)} (0,0)*+{\scriptstyle \red{x}}; (3,-5)},
{\ar@{-}@`{(3,-2.5),(0,-3)}|(0.5){\hole} (3,0)*+{\scriptstyle y}; (0,-5)},
\endxy}\,.
$$
Note that the object $x$ is colored in red as the half-braiding $\s_{x, y}$ is only natural in $y$.

For $x \in \CC$, $\bx =(x, R^{-1}_{-,x}) \in Z(\CC)$. The assignment $x \mapsto \bx$ can be extended to a fully faithful braided tensor functor (or simply embedding) $\iota : \CC \to Z(\CC)$ which preserves the spherical structures  and hence the dimensions. So we can consider  $\CC$ as a premodular subcategory of $Z(\CC)$ under the identification $x \mapsto \bx = (x, R^{-1}_{-,x}) \in Z(\CC)$.  

Now we recall the definition \cite[\S 2]{NS4} of generalized Frobenius-Schur indicators $\nu_{2,1}^\bX(z)$ for any $\bX=(x, \s_{x,-}) \in Z(\DD)$ and $z \in \DD$, where $\DD$ is any \emph{strict spherical fusion category}. Consider the linear operator $E^{(2,1)}_{\bX, z}: \DD(x, z\o z) \to \DD(x, z\o z)$ given by
$$
E^{(2,1)}_{\bX, z}: \vcenter{\xy 0;/r1pc/:
(0,0)*{\bullet},
(0,0); (-1,-1.2)*+{\scriptstyle{z}} **\dir{-},
(0,0); (1,-1.2)*+{\scriptstyle{z}} **\dir{-},
{\ar@{-} (0,0);(0,1.2)*+{\scriptstyle x}},
\endxy} \mapsto \,\,
\vcenter{\xy 0;/r1pc/:
(0,0)*{\bullet},
{\ar@{-}@`{(-1, -1.2), (-2, 0),(0.2,1.7), (1,1.2)}|(0.55)\hole (0,0);(2.5,-1.2)*+{\scriptstyle z}},
{\ar@[red]@{-}@`{(0,1),(-1,.7)}(0,0);(-1,2)*+{\scriptstyle \red{x}}},
(0,0); (1,-1.2)*+{\scriptstyle{z}} **\dir{-},
\endxy}
$$
The superscript $(2,1)$ means there are 2 legs labeled by $z$, and the fisrt one is rotated to the other side of the second leg labelled by $z$ via the half-braiding $\s_{x,z}$.
The generalized Frobenius-Schur indicators $\nu_{2,1}^\bX(z)$ is defined as
$$
\nu_{2,1}^\bX(z) = \Tr(E^{(2,1)}_{\bX, z})\,.
$$
The readers are referred to  \cite[\S 2]{NS4} for more details on generalized Frobenius-Schur indicators, their properties and some applications.

Now we return to the embedding $\iota: \CC \to Z(\CC)$ described above for the premodular category $\CC$. In this case, for $c, a \in \Irr(\CC)$,  $\iota(c) = (c,\s_{c,-})$ with the half-braiding $\s_{c,a} = R_{a,c}^{-1}$. In terms of diagrams, we have
$$
\vcenter{\xy 0;/r.4pc/:
{\ar@[red]@{-}@`{(0,-2.5), (3,-3)} (0,0)*+{\scriptstyle \red{c}}; (3,-5)},
{\ar@{-}@`{(3,-2.5),(0,-3)}|(0.5){\hole} (3,0)*+{\scriptstyle a}; (0,-5)},
\endxy} = \vcenter{\xy 0;/r.4pc/:
{\ar@{-}@`{(0,-2.5), (3,-3)}|(0.5){\hole} (0,0)*+{\scriptstyle c}; (3,-5)},
{\ar@{-}@`{(3,-2.5),(0,-3)} (3,0)*+{\scriptstyle a}; (0,-5)},
\endxy}\,. 
$$  
Thus,
$$
E^{(2,1)}_{\iota(c), a}: \vcenter{\xy 0;/r1pc/:
(0,0)*{\bullet},
(0,0); (-1,-1.2)*+{\scriptstyle{a}} **\dir{-},
(0,0); (1,-1.2)*+{\scriptstyle{a}} **\dir{-},
{\ar@{-} (0,0);(0,1.2)*+{\scriptstyle c}},
\endxy} \mapsto \,\,
\vcenter{\xy 0;/r1pc/:
(0,0)*{\bullet},
{\ar@{-}@`{(-1, -1.2), (-2, 0),(0.2,1.7), (1,1.2)} (0,0);(2.5,-1.2)*+{\scriptstyle a}},
{\ar@{-}@`{(0,1),(-1,.7)}|(.55)\hole (0,0);(-1,2)*+{\scriptstyle c}},
(0,0); (1,-1.2)*+{\scriptstyle{a}} **\dir{-},
\endxy}
=\,
\vcenter{\xy 0;/r1pc/:
(0,0)*{\bullet},
{\ar@{-}@`{ (-1, -2),(-2, -1.5), (-1, .3)}|(.08)\hole (0,0);(1.5,-2.5)*+{\scriptstyle a}},
{\ar@{-}(0,0);(0,1.5)*+{\scriptstyle c}},
{\ar@{-}@`{ (2, -.4)}|(.72)\hole (0,0);(-.7,-2.5)*+{\scriptstyle a}},
\endxy}
=\,\theta_a\,
\vcenter{\xy 0;/r1pc/:
(0,0)*{\bullet},
{\ar@{-}(0,0);(0,1.5)*+{\scriptstyle c}},
{\ar@{-}@`{(.7,-.4)} (0,0);(.7,-1)},
{\ar@{-}@`{(-.7,-.4)} (0,0);(-.7,-1)},
\vtwistneg~{(-.7,-1)}{(.7,-1)}{(-.7,-2.5)}{(.7,-2.5)},
\endxy}\,.
$$
 Note that the semisimplicity of $\CC$ implies that the compositions of morphisms define a nondegenerate pairing 
 $$
 \CC(a \o b, c)\mathop{\o}\limits_\BC\CC(c, a \o b) \to \CC(c,c) =\BC
 $$
 for any $a,b, c \in \Irr(\CC)$. To compute the trace of $ E^{(2,1)}_{\iota(c), a}$, one can pick any gauge choice $B=\{B_c^{a,b} \mid a,b,c \in \Irr(\CC)\}$ of $\CC$. The pairing determines a dual basis $B^c_{a,b}$ for $\CC(c, a \o b)$ such that  
 $$
 \vcenter{\xy 0;/r1pc/:
(0,0)*{\bullet},
(0,-2)*{\bullet},
{\ar@{-}(0,0);(0,1.5)*+{\scriptstyle c}},
{\ar@{-}@`{(1,-.4), (1,-1.6)} (0,0);(0,-2)},
{\ar@{-}@`{(-1,-.4), (-1,-1.6)} (0,0);(0,-2)},
{\ar@{-}(0,-2);(0,-3.5)*+{\scriptstyle c}},
(-1.1, -1)*+{\scriptstyle a}, 
(1.1, -1)*+{\scriptstyle b}, 
(0, -1.6)*+{\scriptstyle \a}, 
(0, -.5)*+{\scriptstyle \b}, 
\endxy}=\,\delta_{\a, \b}
\vcenter{\xy 0;/r1pc/: 
{\ar@{-}(0,1.5)*+{\scriptstyle c};(0,-3.5)*+{\scriptstyle c}},
\endxy}\quad \text{ for}\quad \a, \b =1, \dots, N^{a,b}_c\,.
$$
Therefore, for any $a,c \in \Irr(\CC)$, 
\begin{equation} \label{eq:2nd_indicator_R}
\nu_{2,1}^{\iota(c)}(a) =\Tr(E^{(2,1)}_{\iota(c),a)})=\theta_a \sum_{\a =1}^{N^{a,a}_c} 
\vcenter{\xy 0;/r1pc/:
(0,0)*{\bullet},
{\ar@{-}(0,0);(0,1.5)*+{\scriptstyle c}},
{\ar@{-}@`{(.7,-.4)} (0,0);(.7,-1)},
{\ar@{-}@`{(-.7,-.4)} (0,0);(-.7,-1)},
\vtwistneg~{(-.7,-1)}{(.7,-1)}{(-.7,-2.5)}{(.7,-2.5)},
{\ar@{-}@`{(.7,-3.1)} (0,-3.5);(.7,-2.5)},
{\ar@{-}@`{(-.7,-3.1)} (0,-3.5);(-.7,-2.5)},
(0,-3.5)*{\bullet},
(0, -3.1)*+{\scriptstyle \a}, 
(0, -.4)*+{\scriptstyle \a}, 
{\ar@{-}(0,-3.5);(0,-5)*+{\scriptstyle c}},
\endxy} =\theta_a \Tr(R_{a,a}^c)\,. 
 \end{equation}
Hence $\Tr(R_{a,a}^c) =  \theta_a^{-1} \nu_{2,1}^{\iota(c)}(a)$.

It follows from Proposition \ref{p1} and the subsequent Remark \ref{r1} that $R_{a,a}^c$ can only have the eigenvalues $\frac{\pm \sqrt{\theta_c}}{\theta_a}$, and the matrix $[R_{a,a}^c]$ relative to some basis $B^{a,a}_c$ for $\CC(a \o a, c)$ is given by
$$
[R_{a,a}^c] =  \frac{\sqrt{\theta_c}}{\theta_a}\left[\begin{array}{c|c} I_{d_+}& 0\\\hline 0& -I_{d_-} \end{array}\right]
$$
where $d_\pm$ are the multiplicities of the eigenvalues $\frac{\pm \sqrt{\theta_c}}{\theta_a}$ respectively. Note that $d_++d_-= N^{a,a}_c$. By \eqref{eq:2nd_indicator_R},
$$
d_+- d_- = \frac{\theta_a}{\sqrt{\theta_c}} \Tr(R_{a,a}^c) =\frac{1}{\sqrt{\theta_c}} \nu_{2,1}^{\iota(c)}(a)
$$
and so
$$
0\le d_{\pm}= \frac{1}{2}(N^{a,a}_c \pm \frac{1}{\sqrt{\theta_c}} \nu_{2,1}^{\iota(c)}(a)).
$$
Thus,  $\frac{1}{\sqrt{\theta_c}} \nu_{2,1}^{\iota(c)}(a)$ is an integer which has the same parity as $N_c^{a,a}$ (i.e. they are congruent modulo 2).

The value of $\nu_{2,1}^{\iota(c)}(a)$ is completely determined by the modular data of the center $Z(\CC)$, and the forgetful functor $Z(\CC) \to \CC$ \cite[Cor.  5.6]{NS4}.  Precisely,  
\begin{equation}\label{eq:gen_2nd_indicator}
\nu_{2,1}^{\iota(c)}(a) = \frac{1}{\dim(\CC)} \sum_{\bX \in \Irr(Z(\CC))} [x: a]_\CC\,  \mathbb{S}_{\iota(c), \bX}  \, \bbtheta_\bX^2
\end{equation}
where $[x: a]_\CC = \dim \CC(x, a)$  if $\bX=(x, \sigma_{x,-})$, $\bbtheta_\bX$ and $\mathbb{S}_{\iota(c), \bX}$ are the modular data of $Z(\CC)$.    Since the value of $\Tr(R_{a,a}^c)$ and the fusion coefficient $N_c^{a,a}$ completely determine the multiplicities of the eigenvalues $\frac{\sqrt{\theta_c}}{\theta_a}$ of the matrix $[R_{a,a}^c]$ presented in Proposition \ref{p1} up to an ordering of the gauge choice. This proves the first part of following statement.
\begin{thm}\label{t:2}
Let $\CC$ be a premodular category over $\BC$. The braiding of $\CC$ is completely determined by its fusion rules, its ribbon structure and the generalized Frobenius-Schur indicators $\nu_{2,1}^{\iota(c)}(a)$ for all $a, c \in \Irr(\CC)$.  The value $\frac{1}{\sqrt{\theta_c}}\nu_{2,1}^{\iota(c)}(a)$ is an integer bounded by $N_c^{a,a}$ and has the same parity  as $N_c^{a,a}$ for any $a, c \in \Irr(\CC)$. 

The matrix $[R_{a,a}^c]$ in Proposition \ref{p1} is $\frac{\sqrt{\theta_c}}{\theta_a}\left[\begin{array}{c|c} I_{d_+}& 0\\\hline 0& I_{d_-} \end{array}\right]$ for some basis $B^{a,a}_c$ of $\CC(a\o a, c)$, and 
\begin{align}\label{eq:multipicity}
d_{\pm}= \frac{1}{2}(N^{a,a}_c \pm \frac{1}{\sqrt{\theta_c}} \nu_{2,1}^{\iota(c)}(a))\,.
\end{align}

In particular, if $\CC$ is modular, the braiding of $\CC$ is completely determined by the modular data $(S,T)$ of $\CC$ and
\begin{equation} \label{eq2}
    d_{\pm}
 =  \frac{1}{2}\left(N^{a,a}_c \pm \frac{1}{\sqrt{\theta_c} \dim \CC} \sum_{k, l \in \Irr(\CC)} d_k \ol{S}_{c,l} N^{kl}_a \frac{\theta^2_k}{\theta^2_l}\right)\,,
\end{equation}
where $d_k$ is the categorical dimension of the object $k$
\end{thm}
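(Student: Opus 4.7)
The plan is to first observe that the first three assertions of the theorem are already essentially established by the discussion preceding the statement, and then to focus the main effort on the modular case, i.e.\ on equation \eqref{eq2}. Specifically, Proposition \ref{p1} pins down $[R_{a,b}^c]$ entirely from $\theta$ when $a\ne b$, so the braiding is controlled by the matrices $[R_{a,a}^c]$, and Remark \ref{r1} identifies these up to the pair of multiplicities $(d_+,d_-)$. The computation \eqref{eq:2nd_indicator_R} provides $\Tr(R_{a,a}^c)=\theta_a^{-1}\nu_{2,1}^{\iota(c)}(a)$. Combining $d_++d_-=N^{a,a}_c$ with $d_+-d_-=\theta_c^{-1/2}\nu_{2,1}^{\iota(c)}(a)$ yields \eqref{eq:multipicity}, and the integrality, matching parity, and bound for $\theta_c^{-1/2}\nu_{2,1}^{\iota(c)}(a)$ are then forced by the constraint $d_\pm\in\BZ_{\ge 0}$.

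For the modular case, the strategy is to substitute the general indicator formula \eqref{eq:gen_2nd_indicator} into \eqref{eq:multipicity} and simplify using the classical decomposition $Z(\CC)\simeq\CC\boxtimes\CC^{\text{rev}}$ available for modular $\CC$. Under this equivalence, simple objects of $Z(\CC)$ are indexed by ordered pairs $(k,l)\in\Irr(\CC)\times\Irr(\CC)$ with underlying $\CC$-object $k\otimes l$, so $[x:a]_\CC$ becomes the fusion coefficient $N^{kl}_a$. The ribbon twist of $Z(\CC)$ at $(k,l)$ is $\theta_k\theta_l^{-1}$, since reversing the braiding inverts the twist, and so $\bbtheta^2_{(k,l)}=\theta_k^2/\theta_l^2$. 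The embedding $\iota(c)=(c,R^{-1}_{-,c})$ is identified with $\1\boxtimes c^{\text{rev}}$ in the product decomposition, and hence $\mathbb{S}_{\iota(c),(k,l)}=S^{\CC}_{\1,k}\cdot S^{\CC^{\text{rev}}}_{c,l}=d_k\,\ol{S}_{c,l}$. Substituting these into \eqref{eq:gen_2nd_indicator} and then into \eqref{eq:multipicity} produces exactly \eqref{eq2}; the stronger claim that the braiding is determined by $(S,T)$ alone then follows since $N^{kl}_a$ is itself extracted from $S$ via the Verlinde formula.

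The main obstacle is to pin down the correct identification of the embedding $\iota$ with the \emph{second} factor $\1\boxtimes c^{\text{rev}}$ rather than the first factor $c\boxtimes\1$ or a variant involving duals; a different identification would alter the $S$-matrix entries in \eqref{eq2} (for example replacing $d_k\ol{S}_{c,l}$ by $S_{c,k}d_l$), contradicting the desired form. The cleanest way to nail this down is to compute $\mathbb{S}_{\iota(c),\iota(a)}$ directly inside $Z(\CC)$ from its definition as a trace of the double braiding: since the $Z(\CC)$-braiding on $\iota(c),\iota(a)$ is given by the half-braiding $R^{-1}_{a,c}$, one obtains $\mathbb{S}_{\iota(c),\iota(a)}=\Tr(R^{-1}_{c,a}R^{-1}_{a,c})$, which is precisely the $S$-matrix entry $S^{\CC^{\text{rev}}}_{c,a}=\ol{S}_{c,a}$ of the reversed category, matching the product formula $S^{\CC}_{\1,\1}\cdot S^{\CC^{\text{rev}}}_{c,a}$ only under the second-factor identification. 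Once this matching is secured, the remaining substitutions into \eqref{eq:gen_2nd_indicator} and \eqref{eq:multipicity} are routine.
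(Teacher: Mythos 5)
Your proposal is correct. For the premodular part (the first three assertions and \eqref{eq:multipicity}) you argue exactly as the paper does: everything is read off from Proposition \ref{p1}, Remark \ref{r1} and the trace identity $\Tr(R_{a,a}^c)=\theta_a^{-1}\nu_{2,1}^{\iota(c)}(a)$ of \eqref{eq:2nd_indicator_R}, after which the linear system $d_++d_-=N^{a,a}_c$, $d_+-d_-=\theta_c^{-1/2}\nu_{2,1}^{\iota(c)}(a)$ forces the integrality, parity and bound from $d_\pm\in\BZ_{\ge 0}$. Where you genuinely diverge is the modular case: the paper simply quotes \cite[Prop.~6.1]{NS4} for the closed formula \eqref{eq:2nd_ind_modular} and then invokes the Verlinde formula, whereas you rederive that formula from the general center formula \eqref{eq:gen_2nd_indicator} together with M\"uger's equivalence $Z(\CC)\simeq\CC\boxtimes\CC^{\text{rev}}$. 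Your identifications are all correct: $[k\otimes l:a]_\CC=N^{kl}_a$, $\bbtheta_{k\boxtimes l}=\theta_k\theta_l^{-1}$, and --- since the paper's embedding is $\iota(x)=(x,R^{-1}_{-,x})$, i.e.\ the reverse-braiding embedding --- $\iota(c)\cong\1\boxtimes c$ lives in the second ($\CC^{\text{rev}}$) factor, so $\mathbb{S}_{\iota(c),k\boxtimes l}=S_{\1,k}\,\ol{S}_{c,l}=d_k\,\ol{S}_{c,l}$. Your sanity check $\mathbb{S}_{\iota(c),\iota(a)}=\Tr\bigl((R_{a,c}R_{c,a})^{-1}\bigr)=\ol{S}_{c,a}$ is a legitimate and worthwhile way to rule out the first-factor identification, which would otherwise corrupt \eqref{eq2} into the $S_{c,k}d_l$ variant you mention. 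What your route buys is a self-contained derivation of \eqref{eq:2nd_ind_modular} (essentially reproving the cited result of \cite{NS4}); what it costs is having to justify the M\"uger decomposition and the compatibility of the $S$-matrix, twists and forgetful functor with the Deligne product, all of which the paper's citation packages for free.
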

\begin{proof}
For the last assertion, it follows from \cite[Prop. 6.1]{NS4} that
\begin{equation}\label{eq:2nd_ind_modular}
\nu_{2,1}^{\iota(c)}(a) = \frac{1}{\dim \CC} \sum_{k, l \in \Irr(\CC)} d_k \ol{S}_{c,l} N^{kl}_a \frac{\theta^2_k}{\theta^2_l},
\end{equation}
where $S_{c,l}$ and $T_{c,l}=\delta_{c,l}\theta_l$ are the modular data of $\CC$. By the Verlinde formula, the fusion rules of $\CC$ are determined by the $S$-matrix. Therefore, the braiding of $\CC$ is completely determined by the modular data $(S,T)$ of $\CC$. 
\end{proof}
\begin{remark}  
Let $\CC$ be a modular tensor category. It was conjectured in \cite{PSS} that
$$
Y_{a,b}^c:=\frac{\sqrt{\theta_b}}{\sqrt{\theta_{c}}\dim \CC} \sum_{k, l \in \Irr(\CC)} \ol{S}_{b,k} \ol{S}_{c,l} N^{kl}_a \frac{\theta^2_k}{\theta^2_l} \in \BZ 
$$ 
for  all $a, b, c \in \Irr(\CC)$. It was further conjectured in \cite{BHS} that 
$$
\dim \CC(c, a\o a\o b) \pm Y_{a,b}^c \ge 0\,.
$$
Both conjectures were proved in \cite[\S 8]{NS4}, and it was further proved in \cite[Prop. 4.2]{BCT} that $\dim \CC(c, a\o a\o b) \pm Y_{a,b}^c$ is always an even integer. The second statement of Theorem \ref{t:2} could be considered as an extension of \cite[Prop. 4.2]{BCT} to premodular categories but only for $b = \1$.

The expression \eqref{eq:2nd_indicator_R} also appeared in \cite[Prop. 2.3 (1)]{BPD} with $\nu_{2,1}^{\iota(c)}(a)$ being substituted by \eqref{eq:2nd_ind_modular} and the Verlinde formula. Equation \eqref{eq2} can also be found in \cite{Bantay97}.
\end{remark}

\noindent
{\bf Acknowledgements:}
S.-H. N. was partially supported by NSF grant  DMS-1664418 and the Simons Foundations.
E.C.R. was partially supported by NSF grant DMS-2205962.  X.-G.W was partially
supported by NSF grant DMR-2022428 and by the Simons Collaboration on
Ultra-Quantum Matter, which is a grant from the Simons Foundation (651446,
XGW).  The authors have no relevant financial or non-financial interests to
disclose.

This material is based upon work supported by the National Science Foundation under Grant No. DMS-1928930, while the author was in residence at the Mathematical Sciences Research Institute in Berkeley, California, during the year of 2024.
\bibliographystyle{unsrt}
\bibliography{mybibl}

\end{document}